\documentclass[12pt]{amsart}
\usepackage[active]{srcltx}
\usepackage{a4wide}
\usepackage{amsthm,amsfonts,amsmath,mathrsfs,amssymb}
\usepackage{dsfont}
\usepackage{fourier}
\def\be{\begin{equation}}
\def\ee{\end{equation}}


%
\newtheorem*{theorem*}{Theorem}
\newtheorem{theorem}{Theorem}

\newtheorem*{proposition*}{Proposition}
\newtheorem{lemma}{Lemma}

\theoremstyle{remark}
\linespread{1.5}
\newcommand{\nc}{\newcommand}

\newcommand{\D}{{\mathbb D}}

\nc{\supp}{\operatorname{supp}}

\nc{\dif}{\operatorname{d}} \nc{\im}{\operatorname{i}}
\nc{\Hi}{{\mathscr{H}}^\infty} \nc{\Ht}{{\mathscr{H}}^2}
\nc{\Hone}{{\mathscr{H}}^1} \nc{\ol}{\overline} \nc{\bz}{\mathbf{z}}
\nc{\bw}{\mathbf{w}} \nc{\eps}{\varepsilon}

\allowdisplaybreaks[3]

\begin{document}
\title{G\'{a}l-type GCD sums beyond the critical line}
\author{Andriy Bondarenko}
\address{Department of Mathematical Analysis\\ Taras Shevchenko National University of Kyiv\\
Volody- myrska 64\\ 01033 Kyiv\\ Ukraine}
\address{Department of Mathematical Sciences \\ Norwegian University of Science and Technology \\ NO-7491 Trondheim \\ Norway}
\email{andriybond@gmail.com}

\author[Titus Hilberdink]{Titus Hilberdink}
\address{Titus Hilberdink\\Department of Mathematics and Statistics\\University of Reading, Reading RG6 6AX, UK}
\email{t.w.hilberdink@reading.ac.uk}

\author[Kristian Seip]{Kristian Seip}
\address{Kristian Seip\\Department of Mathematical Sciences \\ Norwegian University of Science and Technology \\ NO-7491 Trondheim \\ Norway}
\email{seip@math.ntnu.no}

\thanks{Bondarenko and Seip were supported by Grant 227768 of the Research Council of Norway.}
\subjclass[2010]{11C20}

\maketitle

\begin{abstract}
We prove that
\[
\sum_{k,{\ell}=1}^N\frac{(n_k,n_{\ell})^{2\alpha}}{(n_k
n_{\ell})^{\alpha}} \ll N^{2-2\alpha} (\log N)^{b(\alpha)}
\]
holds for arbitrary integers $1\le n_1<\cdots < n_N$ and $0<\alpha<1/2$ and show by an example that this bound is optimal, up to the precise value of the exponent $b(\alpha)$. This estimate complements recent results for $1/2\le \alpha \le 1$ and shows that there is no ``trace'' of the functional equation for the Riemann zeta function in estimates for such GCD sums when $0<\alpha<1/2$.    \end{abstract}

\section{Introduction}

The study of greatest common divisor (GCD) sums of the form
\begin{equation}\label{gcda}
\sum_{k,\ell=1}^N\frac{(n_k,n_{\ell})^{2\alpha}}{(n_k
n_{\ell})^\alpha}
\end{equation}
begins with G\'{a}l's theorem \cite{G} which asserts that when $\alpha=1$, $CN(\log \log N)^2$ is an optimal upper bound for \eqref{gcda}, with $C$ an absolute constant independent of $N$ and the distinct positive integers $n_1,...,n_N$ (the best possible value for $C$ is $6e^{2\gamma}/\pi^2$, where $\gamma$ is Euler's constant, as shown recently by Lewko and Radziwi{\l\l} \cite{LR}). Dyer and Harman \cite{DH},  motivated by applications in the metric theory of diophantine approximation, obtained the first estimates for the range $1/2 \le \alpha <1$. Recent work of Aistleitner, Berkes, and Seip \cite{ABS} for $1/2<\alpha<1$ and Bondarenko and Seip \cite{BS1, BS3} for $\alpha=1/2$ has led to the bounds
\begin{equation}\label{gcdab}
\sum_{k,\ell=1}^N\frac{(n_k,n_{\ell})^{2\alpha}}{(n_k
n_{\ell})^\alpha} \ll \begin{cases} N \exp\left(c(\alpha) \frac{(\log N)^{1-\alpha}}{(\log\log N)^{\alpha}} \right), & 1/2 < \alpha < 1 \\
  N \exp\left(A\sqrt{\frac{\log N \log\log\log N}{\log\log N}}\right), & \alpha=1/2, \end{cases}
\end{equation}
which are optimal, up to the precise values of the constants $c(\alpha)$ and $A$; the asymptotic behavior of $c(\alpha)$ has been clarified both when $\alpha \searrow 1/2 $ and $\alpha \nearrow 1$.

Bounds for the sum in \eqref{gcda} have a long history, and they have had a number of different applications; see the recent papers \cite{ABS, BS1, LR} and the references found there. In recent years, an additional interesting application has surfaced: Lower bounds for specific sums of the form \eqref{gcda} or corresponding quadratic forms have turned out to be useful for detecting large values of the Riemann zeta function $\zeta(s)$. This line of research was initiated in work of Soundararajan \cite{So} and Hilberdink \cite{H} and later pursued by Aistleitner \cite{A} who was the first to make the link to G\'{a}l-type estimates. Recently, using Soundararajan's resonance method \cite{So} and a certain large G\'{a}l-type sum for $\alpha=1/2$, Bondarenko and Seip showed that for every $c$, $0<c<1/\sqrt{2}$, there exists a $\beta$, $0<\beta<1$, such that the maximum of $|\zeta(1/2+it)|$ on the interval $T^{\beta}\le t \le T$ exceeds 
$\exp\left(c \sqrt{\log T \log\log\log T/\log\log T}\right)$ for all $T$ large enough. 

These developments have led us to look more closely at the ``phase transition'' at $\alpha=1/2$ by seeking estimates for \eqref{gcda} also in the range $0<\alpha<1/2$, which could possibly correspond to large values of $\zeta(\sigma+it)$ beyond the critical line $\sigma=1/2$. The present paper shows, however, that there is no symmetry in the estimates for \eqref{gcda}  when $\alpha$ is replaced by $1-\alpha$, as one might have expected from the functional equation for $\zeta(s)$. 

To state our main result, we let 
 $\mathcal{M}$ denote an arbitray  finite set of positive integers and introduce the quantity
\[ \Gamma_\alpha(N):=\max_{|\mathcal{M}|=N} \frac{1}{N} \sum_{m,n\in \mathcal{M}}\frac{{(m,n)}^{2\alpha}}{(mn)^{\alpha}}. \]
\begin{theorem}\label{main}
For every $\alpha$, $0<\alpha<1/2$, there exist positive
constants $a(\alpha)$ and $b(\alpha)$ such that
\begin{equation} \label{goal} N^{1-2 \alpha} (\log N)^{a(\alpha)} \le \Gamma_{\alpha}(N) \le N^{1-2 \alpha} (\log N)^{b(\alpha)} \end{equation}
for sufficiently large $N$.
\end{theorem}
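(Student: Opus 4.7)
The natural starting point is the M\"obius inversion
\[
(m,n)^{2\alpha} \;=\; \sum_{d\mid (m,n)} J_{2\alpha}(d), \qquad J_{2\alpha}(d) \;:=\; \sum_{e\mid d}\mu(d/e)\,e^{2\alpha},
\]
where $J_{2\alpha}$ is the multiplicative function with $J_{2\alpha}(p^k)=p^{2\alpha k}(1-p^{-2\alpha})$, so that $0\le J_{2\alpha}(d)\le d^{2\alpha}$. Substituting and interchanging summation gives the positive-definite expansion
\[
\sum_{m,n\in\mathcal{M}} \frac{(m,n)^{2\alpha}}{(mn)^\alpha} \;=\; \sum_{d\ge 1} J_{2\alpha}(d)\,S_d^{\,2}, \qquad S_d \;:=\; \sum_{\substack{m\in\mathcal{M}\\ d\mid m}} m^{-\alpha}.
\]
Writing $n_d:=|\{m\in\mathcal{M}:d\mid m\}|$, Cauchy--Schwarz on $S_d = d^{-\alpha}\sum_{k:\,dk\in\mathcal{M}} k^{-\alpha}$ combined with the estimate $\sum_{k\le K}k^{-2\alpha} \ll_\alpha K^{1-2\alpha}$ (which uses crucially that $2\alpha<1$) yields $S_d^{\,2} \ll_\alpha d^{-2\alpha} n_d^{\,2-2\alpha}$, and hence
\[
\sum_{m,n\in\mathcal{M}}\frac{(m,n)^{2\alpha}}{(mn)^\alpha} \;\ll_\alpha\; \sum_{d\ge 1} n_d^{\,2-2\alpha}.
\]
The remaining task is to show this divisor-weighted sum is $\ll N^{2-2\alpha}(\log N)^{b(\alpha)}$. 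My strategy is to first reduce, by a compression/relabelling argument, to the case where $\mathcal{M}$ lies in $[1,X]$ with $X \le N(\log N)^{O(1)}$ (roughly: replace large prime factors occurring in $\mathcal{M}$ by the smallest unused primes, checking that this does not decrease the GCD sum). Once this is in place, $\sum_d n_d = \sum_{m\in\mathcal{M}}\tau(m) \ll X\log X$ by the classical Dirichlet divisor bound, and combining with the trivial $n_d\le N$ gives
\[
\sum_d n_d^{\,2-2\alpha} \;\le\; N^{1-2\alpha}\sum_d n_d \;\ll\; N^{2-2\alpha}(\log N)^{b(\alpha)}.
\]

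\textbf{Plan for the lower bound.} Taking $\mathcal{M}=\{1,2,\ldots,N\}$ and performing the standard coprime decomposition $m=da,\,n=db,\,(a,b)=1$ immediately gives
\[
\sum_{m,n\le N}\frac{(m,n)^{2\alpha}}{(mn)^\alpha} \;\sim\; \frac{\zeta(2-2\alpha)}{(1-\alpha)^{2}\zeta(2)}\,N^{2-2\alpha},
\]
since $\sum_d d^{-(2-2\alpha)}$ converges for $\alpha<1/2$. This already yields $\Gamma_\alpha(N)\gg N^{1-2\alpha}$ but with no logarithmic gain. To extract a factor of $(\log N)^{a(\alpha)}$, I would pick $\mathcal{M}$ with extra multiplicative structure: for a short primorial $P=\prod_{p\le y}p$ and a parameter $K$ chosen so that $|\mathcal{M}|\sim N$, consider a set of the form $\mathcal{M}=\{d\cdot k : d\mid P,\ 1\le k\le K\}$ (or a coprime variant to guarantee distinct elements). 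The GCD sum then factorises roughly as the product of the divisor-of-primorial piece $\prod_{p\le y}2(1+p^{-\alpha})$ and an interval-type piece $\sim K^{2-2\alpha}$, and a careful optimisation of $y$ against $K$ is designed to produce the required $(\log N)^{a(\alpha)}$ factor over the baseline $N^{2-2\alpha}$.

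\textbf{Main obstacle.} The hardest step by far is the upper bound, specifically the reduction to $\mathcal{M}$ of polynomially bounded size. Without such a reduction, configurations like divisors of a primorial have $\sum_{m\in\mathcal{M}}\tau(m)$ as large as $N^{\log_2 3}$, so the estimate $\sum_d n_d^{\,2-2\alpha}\le N^{1-2\alpha}\sum_d n_d$ is catastrophically loose. Showing that extremal configurations are genuinely supported on small numbers (or equivalently, that multiplicatively structured $\mathcal{M}$ automatically give a much smaller GCD sum in the range $0<\alpha<1/2$) is the real content of the theorem and the place where the asymmetry with the case $\alpha>1/2$ enters decisively --- precisely because $\sum_d d^{-(2-2\alpha)}<\infty$ caps the gain at a polylogarithmic factor.
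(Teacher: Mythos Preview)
Your upper-bound plan has a genuine gap at exactly the point you flag as the ``main obstacle'': the reduction to $\mathcal{M}\subset[1,X]$ with $X\le N(\log N)^{O(1)}$ is impossible. Relabelling primes to the smallest available ones does increase the GCD sum, but it cannot reduce the \emph{number} of distinct primes used by $\mathcal{M}$. The paper's own lower-bound construction (which realises the $(\log N)^{2\alpha}$ factor) is built from divisors of $k=\prod_{p\le N^{\delta}}p$ together with short arithmetic progressions, so it uses roughly $N^{\delta}/\log N$ distinct primes and has elements of size $\exp(N^{\delta})$. No prime-relabelling can bring such a set into $[1,N(\log N)^{O(1)}]$, and hence your Dirichlet step $\sum_{m\in\mathcal{M}}\tau(m)\le\sum_{m\le X}\tau(m)\ll X\log X$ is unavailable for the configurations that actually matter. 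The paper's upper bound takes a quite different route after the compression: it passes (via a G\'al-type lemma) to a \emph{divisor-closed} set at the price of an extra factor $2^{\omega(mn/(m,n)^2)}$ in each summand, then groups elements by their square-free kernel, applies Cauchy--Schwarz with a divisor-function weight $d(m)^{\beta}$, and controls the resulting sums using two facts specific to divisor-closed sets: an estimate $\sum_{m\in\mathcal K}d(m)^{\beta}m^{-2\alpha}\ll K^{1-2\alpha}(\log K)^{O(1)}$ valid for \emph{arbitrary} $K$-sets, and the halving property $|\tfrac1p\mathcal{M}|\le\tfrac12|\mathcal{M}|$ for square-free divisor-closed $\mathcal{M}$. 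These replace your polynomial-support reduction entirely.

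For the lower bound, your instinct to take a product set $\{d\cdot k:d\mid P,\ k\le K,\ (k,P)=1\}$ is in the right direction, but the specific factorisation you write down does not give a logarithmic gain when $\alpha<1/2$: the divisor piece $\prod_{p\le y}2(1+p^{-\alpha})$ compared to $(2^{\pi(y)})^{2-2\alpha}$ carries a factor $2^{(2\alpha-1)\pi(y)}$, which \emph{decays} and swamps the growth of $\prod_{p\le y}(1+p^{-\alpha})$. The paper's construction is more delicate: it takes $P=\prod_{p\le N^{\delta}}p$, keeps only divisors $d=P/a$ with $a$ ranging over the first $N^{1/3}$ smooth square-free numbers, and attaches to each $d$ a set $S_d$ of the first $\lfloor N\phi(d)/P\rfloor$ integers coprime to $P/d$. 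The logarithmic gain then comes from a multiplicative identity expressing the resulting GCD sum as an Euler product whose local factors are $1+2\alpha/p+O(p^{2\alpha-2})$, yielding $\gg(\log N)^{2\alpha}$ via Mertens. The varying sizes $|S_d|\approx N\phi(d)/P$ are essential; a uniform $K$ loses the effect.
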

 
Before giving the proof of this theorem, we will in the next section set the stage by considering the simpler but closely related question of finding the largest eigenvalue of the positive definite matrix $(m,n)^{2\alpha}/(mn)^{\alpha}$, $1\le m,n \le N$:
\begin{theorem}\label{spectral}
For every $\alpha$, $0<\alpha<1/2$, there exists a constant $C_{\alpha}$ such that \begin{equation}
\label{hh}
\max_{|a_1|^2+\cdots +|a_N|^2=1}\sum_{m,n\le N}\frac{a_m\bar{a}_n{(m,n)}^{2\alpha}}{(mn)^{\alpha}}\le C_\alpha N^{1-2\alpha}.
\end{equation}
\end{theorem}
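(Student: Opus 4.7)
I plan to exploit the multiplicative structure of the kernel via Ramanujan's identity
\[
(m,n)^{2\alpha} \;=\; \sum_{d\,\mid\,(m,n)} J_{2\alpha}(d),
\]
where $J_{2\alpha} := \mu \ast \mathrm{id}^{2\alpha}$ is the Jordan totient of order $2\alpha$. Substituting this into the left-hand side of \eqref{hh} and interchanging the order of summation converts the quadratic form into
\[
\sum_{d\le N} J_{2\alpha}(d)\,|T_d|^{2}, \qquad T_d := \sum_{\substack{m\le N \\ d\,\mid\,m}} \frac{a_m}{m^{\alpha}},
\]
which is the standard first move for this family of GCD sums.

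The second step is to bound $|T_d|^{2}$ by the Cauchy--Schwarz inequality:
\[
|T_d|^{2} \le \Bigl(\sum_{\substack{m\le N \\ d\,\mid\,m}} m^{-2\alpha}\Bigr)\Bigl(\sum_{\substack{m\le N \\ d\,\mid\,m}} |a_m|^{2}\Bigr).
\]
The hypothesis $\alpha < 1/2$ enters critically here: since $2\alpha<1$, the substitution $m=dk$ gives $\sum_{d\mid m,\,m\le N} m^{-2\alpha} \le C_\alpha N^{1-2\alpha}/d$. Reinserting this and swapping the order of summation once more produces the preliminary estimate
\[
\sum_{m,n\le N} \frac{a_m \bar a_n (m,n)^{2\alpha}}{(mn)^{\alpha}} \;\le\; C_\alpha\, N^{1-2\alpha} \sum_{m\le N} |a_m|^{2}\, g(m),
\]
where $g(m) := \sum_{d\,\mid\,m} J_{2\alpha}(d)/d$ is a multiplicative function.

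The main obstacle is the concluding step, because $g(m)$ is \emph{not} bounded uniformly in $m$: an Euler-product computation gives the local bound $g(p^{k}) \le (1-1/p)/(1-p^{2\alpha-1})$, so $\log g(m) \ll_\alpha \sum_{p\mid m}(p^{2\alpha-1}-p^{-1})$, and on integers with many small prime divisors this can grow like $(\log N)^{2\alpha}/\log\log N$---subpolynomial in $N$, but still unbounded. To recover the clean constant $C_\alpha$ without any logarithmic loss, I would reopen the Cauchy--Schwarz step with a carefully tuned auxiliary weight $w(k)$ on $k=m/d$, balancing $w$ so that $\sum_k w(k)/k^{2\alpha}$ still converges (again using $2\alpha<1$) while $\sum_{d\mid m} J_{2\alpha}(d)\, w(m/d)/d^{2\alpha}$ is uniformly $O_\alpha(1)$, a fact which in turn rests on the convergence of $\sum_p p^{2\alpha-2}$. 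As a useful consistency check in the sub-range $\alpha < 1/4$, one can bypass the function $g$ altogether by dominating the operator norm through the Hilbert--Schmidt norm: M\"obius inversion gives $\sum_{m,n\le N}(m,n)^{4\alpha}/(mn)^{2\alpha} \asymp_\alpha N^{2-4\alpha}$, immediately yielding \eqref{hh}. For $1/4 \le \alpha < 1/2$, where the Hilbert--Schmidt estimate is too weak, the weighted Cauchy--Schwarz refinement above is essential.
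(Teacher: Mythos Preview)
Your overall strategy---decompose the kernel over divisors, factor into a square, and apply a weighted Cauchy--Schwarz inequality with a multiplicative weight---is exactly the route the paper takes. Your Jordan-totient identity and the paper's grouping by $(m,n)=d$ lead, after one trivial inequality, to the same intermediate object $\sum_{d}\bigl(\sum_{k\le N/d}|a_{dk}|/k^{\alpha}\bigr)^{2}$, so there is no substantive divergence at that stage.

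The gap is in your specification of the weight. You ask for a weight $w$ with two properties: (i) $\sum_{k} w(k)/k^{2\alpha}$ \emph{converges}, and (ii) the resulting multiplicative function $h(m)=\sum_{d\mid m} J_{2\alpha}(d)\,d^{-2\alpha}/w(m/d)$ is bounded. These are incompatible. If (i) held, your estimate would give $\sum_{m,n}a_m\bar a_n(m,n)^{2\alpha}/(mn)^{\alpha}=O_\alpha(1)$ uniformly in $N$, contradicting the sharp lower bound $\asymp N^{1-2\alpha}$ (take $a_m\equiv N^{-1/2}$ and use \eqref{exact}). Concretely: for a multiplicative $w$, convergence of $\sum_k w(k)k^{-2\alpha}$ with $2\alpha<1$ forces $w(p)\to 0$ along primes, whereas boundedness of $h$ forces $h(p)=1+(1-p^{-2\alpha})/w(p)$ to stay bounded, hence $w(p)$ bounded away from $0$.

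The paper resolves this by \emph{not} demanding convergence. It chooses the weight $g(m)=\sum_{d\mid m}d^{\alpha-1/2}$ and uses instead the partial-sum estimate
\[
\sum_{k\le x}\frac{g(k)}{k^{2\alpha}}\ll x^{1-2\alpha},
\]
which is precisely what produces the factor $(N/d)^{1-2\alpha}$ and hence the target $N^{1-2\alpha}$. After swapping sums one is left with $h(n)=\sum_{d\mid n} d^{2\alpha-1}/g(n/d)$, and the paper verifies directly on prime powers that this multiplicative function is bounded (the exponent $\alpha-1/2$ in $g$ is chosen exactly so that $1/g(p^{m})\to 1-p^{\alpha-1/2}$ balances the geometric sum $\sum_\ell p^{(2\alpha-1)\ell}$). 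Your intuition that ``convergence of $\sum_p p^{2\alpha-2}$'' is the underlying reason is morally right, but the correct formulation is growth of order $x^{1-2\alpha}$ for the weighted partial sum, not convergence.
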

We refer to \cite{H} for further information and for the precise asymptotics of the maximum in \eqref{hh} in the range $1/2\le \alpha \le 1$.

We notice that there is no logarithmic power in \eqref{hh}. Nevertheless, we will see that the idea for the proof of Theorem~\ref{spectral} (to be given in the next section) is used again as the starting point for the
proof of the bound from above in Theorem~\ref{main}. Resorting to some further ideas and estimates from 
\cite{BS1}, we will prove the latter bound in Section~\ref{abovesection}. In Section~\ref{example}, we construct an example  giving the inequality from below in \eqref{goal}. As expected, this example involves a large number of primes (a positive power of $N$). One may notice that it would not be possible to construct a similar example if we required the set to consist only of square-free numbers. Hence it remains an open problem to prove the analogue of Theorem~\ref{main} in the square-free case. More specifically, we may ask whether the logarithmic power can be discarded in this case as well.

\section{Proof of Theorem~\ref{spectral}}

We begin by noticing that
\[
S(a):=\sum_{d=1}^{N}\sum_{\substack{m,n\le N \\
{(m,n)}=d}}\frac{|a_ma_n|d^{2\alpha}}{(mn)^{\alpha}}
\le\sum_{d=1}^{N}\left(\sum_{m,n\le N/d}\frac{|a_{md}|}{m^{\alpha}}\right)^2.
\]
We introduce the multiplicative function $g(m):=\sum_{d|m}d^{-1/2+\alpha}$. By the Cauchy--Schwarz inequality,
\begin{equation}
\label{hh1}
S(a)\le\sum_{d=1}^{N}\left(\sum_{m\le N/d}\frac{|a_{md}|}{m^{\alpha}}\right)^2\le\sum_{d=1}^{N}\sum_{m\le N/d}\frac{|a_{md}|^2}{g(m)}
\sum_{m\le N/d}\frac{g(m)}{m^{2\alpha}}.
\end{equation}
To estimate the sum $\sum_{m\le N/d}\frac{g(m)}{m^{2\alpha}}$, we notice that
\[
 \sum_{n\le x} \frac{g(n)}{n^{2\alpha}}=\sum_{n\le x}\frac{1}{n^{2\alpha}}\sum_{d|n} \frac{1}{d^{\frac{1}{2}-\alpha}} =\sum_{d\le x} \frac{1}{d^{\frac{1}{2}+\alpha}} \sum_{n\le x/d}\frac{1}{n^{2\alpha}}\ll \sum_{d\le x} \frac{1}{d^{\frac{1}{2}+\alpha}}\Bigl(\frac{x}{d}\Bigr)^{1-2\alpha}\ll x^{1-2\alpha}.
\]
Hence by~\eqref{hh1} we have
\[
S(a)\ll N^{1-2\alpha}\sum_{d=1}^{N}\sum_{m\le N/d}\frac{|a_{md}|^2}{d^{1-2\alpha}g(m)}=N^{1-2\alpha}\sum_{n=1}^N|a_n|^2\sum_{d|n}\frac{d^{2\alpha-1}}{g(n/d)}.
\]
So to finish the proof of Theorem~\ref{spectral}, it is sufficient to show that 
\begin{equation}
\label{hh2}
h(n):=\sum_{d|n}\frac{d^{2\alpha-1}}{g(n/d)}
\end{equation}
is a bounded arithmetic function. We observe that $h(n)$ is a multiplicative function, which means that it suffices to consider
\[ h(p^m)=\sum_{\ell=0}^m \frac{p^{(2\alpha-1) \ell}}{g(p^{m-\ell})}\le \frac{1}{g(p^{m-r})}\sum_{\ell=0}^r p^{(2\alpha-1) \ell}+ \sum_{\ell=r+1}^m p^{(2\alpha-1) \ell} \]
for any $r$. Taking $r=[m/3]$ and using $1/g(p^m)\to 1-p^{\alpha-1/2}$ as $m\to\infty$ shows that $h(p^m)\le 1$ for $p^m$ sufficiently large. We infer from this  that $h(n)$ is a bounded arithmetic function.

As far as the numerical value of the constant $C_\alpha$ in Theorem~\ref{spectral} is concerned, we have confined ourselves to the following special case which seems to be of independent interest: 
\be \label{exact} \frac{1}{N}\sum_{m,n=1}^N \frac{(m,n)^{2\alpha}}{(mn)^{\alpha}}=\frac{\zeta(2-2\alpha)}{\zeta(2)(1-\alpha)^2} N^{1-2\alpha}+O(1). \ee 
\begin{proof}[Proof of \eqref{exact}] 
Write $F_{\alpha}(N)$ for the sum on the left and put $S_{\alpha}(x): = \sum_{\substack{m,n\le x \\ (m,n)=1}}  \frac{1}{(mn)^{\alpha}}$. Then
\[ F_{\alpha}(N) = \sum_{d\le N} \sum_{\substack{m,n\le N \\ (m,n)=d}}  \frac{(m,n)^{2\alpha}}{(mn)^{\alpha}} = \sum_{d\le N} S_{\alpha}(N/d).\]
Also let $T_{\alpha}(x) = \sum_{n\le x} \frac{1}{n^{\alpha}}= \frac{1}{1-\alpha}x^{1-\alpha}+O(1)$. Then
\[ T_{\alpha}(x)^2 = \sum_{m,n\le x}\frac{1}{(mn)^{\alpha}} = \sum_{d\le x}\frac{1}{d^{2\alpha}} \sum_{\substack{m,n\le x/d \\ (m,n)=1}}  \frac{1}{(mn)^{\alpha}} = \sum_{d\le x} \frac{1}{d^{2\alpha}}S_{\alpha}\Bigl(\frac{x}{d}\Bigr).\]
By M\"{o}bius inversion, $S_{\alpha}(x) =  \sum_{d\le x} \frac{\mu(d)}{d^{2\alpha}}T_{\alpha}(\frac{x}{d})^2$ and so
\[ F_{\alpha}(N) = \sum_{d\le N} \beta(n)T_{\alpha}\Bigl(\frac{N}{n}\Bigr)^2,\]
where $\beta(n) = \sum_{d|n}\frac{\mu(d)}{d^{2\alpha}}$. We note that $0<\beta(n)\le 1$ for all $n$. Thus
\[ F_{\alpha}(N) =  \frac{1}{(1-\alpha)^2} \sum_{n\le N} \beta(n)\Bigl( \Bigl(\frac{N}{n}\Bigr)^{2-2\alpha}+O\Bigl(\frac{N}{n}\Bigr)^{1-\alpha}\Bigr) =\frac{N^{2-2\alpha}}{(1-\alpha)^2} \sum_{n\le N} \frac{\beta(n)}{n^{2-2\alpha}} + O\biggl(N^{1-\alpha}\sum_{n\le N} \frac{1}{n^{1-\alpha}}\biggr).\]
The final term is $O(N)$, while $\sum_{n>N} \frac{\beta(n)}{n^{2-2\alpha}}\le\sum_{n>N} \frac{1}{n^{2-2\alpha}} \ll N^{2\alpha-1}$. Finally  
\[ \sum_{n=1}^\infty\frac{\beta(n)}{n^{2-2\alpha}} = \frac{\zeta(2-2\alpha)}{\zeta(2)}, \]
giving the result.
\end{proof}

\section{Proof of the bound from above in Theorem~\ref{main}}\label{abovesection}
In what follows, $\omega(n)$ denotes the number of distinct prime factors in $n$ and $d(n)$ is the divisor function.

We begin by stating the main auxiliary result used to prove the upper bound in \eqref{goal}. 

\begin{lemma}\label{divisor}
For every finite set $\mathcal{M}$ of positive integers there exists a divisor closed set  $\mathcal{M}'$ of positive integers with
$|\mathcal{M}'|=|\mathcal{M}|$ such that
\[ \sum_{m,n\in \mathcal{M}}\frac{{(m,n)}^{2\alpha}}{(mn)^{\alpha}}\le\sum_{m,n\in \mathcal{M}'}\frac{{(m,n)}^{2\alpha}}{(mn)^{\alpha}}
2^{\omega(mn/{(m,n)}^{2} )}. \]
\end{lemma}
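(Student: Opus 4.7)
The plan is to construct $\mathcal{M}'$ together with a bijection $\phi\colon\mathcal{M}\to\mathcal{M}'$ such that $\phi(m)\mid m$ for every $m$, via a greedy procedure: enumerate $\mathcal{M}=\{m_1<m_2<\cdots<m_N\}$ and, at step $i$, set $\phi(m_i)$ equal to some divisor of $m_i$ that is minimal in the divisibility poset among those not yet in $\mathcal{M}_{i-1}':=\{\phi(m_1),\ldots,\phi(m_{i-1})\}$. Such a divisor exists because $m_i\notin\mathcal{M}_{i-1}'$ (all earlier $\phi(m_j)$'s are at most $m_j<m_i$), and by minimality its proper divisors lie in $\mathcal{M}_{i-1}'$, so each $\mathcal{M}_i'$ remains divisor-closed.

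With this $\phi$ in hand, I would re-index the right-hand side of the claimed inequality by the image pair $(\phi(m),\phi(n))$, so that it suffices to establish the pointwise bound
\[ \frac{(m,n)^{2\alpha}}{(mn)^\alpha}\le\frac{(\phi(m),\phi(n))^{2\alpha}}{(\phi(m)\phi(n))^\alpha}\,2^{\omega(\phi(m)\phi(n)/(\phi(m),\phi(n))^2)}. \]
Factoring $f(a,b):=(a,b)^{2\alpha}/(ab)^\alpha$ and $2^{\omega(ab/(a,b)^2)}$ over primes as $\prod_p p^{-\alpha|v_p(a)-v_p(b)|}$ and $\prod_p(1+[v_p(a)\neq v_p(b)])$ respectively, this splits into local inequalities at each prime $p$. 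The case $v_p(\phi(m))=v_p(\phi(n))$ is automatic, and when the two differ the bonus factor $2$ must absorb the possible discrepancy between $|v_p(m)-v_p(n)|$ and $|v_p(\phi(m))-v_p(\phi(n))|$.

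The main obstacle is exactly this local check at ``large'' primes: a careless choice in the greedy step can place a large prime $p$ into $\phi(m_i)$ and pair it with some $\phi(m_j)$ in which $p$ is absent while $v_p(m_i)=v_p(m_j)$, producing an inequality of the form $p^0\le 2p^{-\alpha}$ that fails once $p>2^{1/\alpha}$. I would remedy this by refining the greedy so that, among all divisibility-minimal un-placed divisors of $m_i$, the numerically smallest is chosen, so that $\phi$ absorbs the small primes first. Should even this fail in corner cases, a non-pointwise argument is available: writing $2^{\omega(\phi(m)\phi(n)/(\phi(m),\phi(n))^2)}$ as the number of pairs $(s_1,s_2)$ with $s_1s_2$ squarefree, $s_1\mid\phi(m)/(\phi(m),\phi(n))$ and $s_2\mid\phi(n)/(\phi(m),\phi(n))$, and using the identity $f(a/s_1,b/s_2)=(s_1s_2)^\alpha f(a,b)$ together with the divisor-closedness of $\mathcal{M}'$ to redistribute the double sum. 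The technical crux is showing that one of these approaches indeed yields the full inequality, and this prime-by-prime verification is where I would expect the bulk of the work.
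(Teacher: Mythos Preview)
Your construction of the divisor-closed set $\mathcal{M}'$ via the greedy map $\phi$ is fine, but the inequality cannot be proved this way: the pointwise bound
\[
\frac{(m,n)^{2\alpha}}{(mn)^\alpha}\le\frac{(\phi(m),\phi(n))^{2\alpha}}{(\phi(m)\phi(n))^\alpha}\,2^{\omega\!\left(\frac{\phi(m)\phi(n)}{(\phi(m),\phi(n))^2}\right)}
\]
is simply false in general, for \emph{every} bijection $\phi$ with $\phi(m)\mid m$. Take $\mathcal{M}=\{p,q,pq\}$ with primes $p<q$ and $p^\alpha>4$. Your greedy (even with the ``numerically smallest'' refinement) is forced: $\phi(p)=1$, then $\phi(q)=q$ (the only unused divisor of $q$), then $\phi(pq)=p$. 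For the pair $(q,pq)$ the left side is $p^{-\alpha}$ while the right side is $4(pq)^{-\alpha}$, and $p^{-\alpha}\le 4(pq)^{-\alpha}$ fails. One checks that every other divisor-respecting bijection to a divisor-closed triple also breaks on some pair. So the obstacle you flagged is not a corner case to be patched; it kills the pointwise route altogether. Your fallback ``redistribution'' idea is left at the level of an identity for $2^{\omega}$ and a formula for $f(a/s_1,b/s_2)$, with no indication of how these combine into the global inequality; as stated it is not a proof.

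The paper's argument is structurally different: it does \emph{not} build a single bijection from $\mathcal{M}$ to $\mathcal{M}'$. Instead it transforms the set one prime at a time. For each prime $p$, it groups $\mathcal{M}$ into $p$-chains (elements sharing the same $p$-free part $m_j$) and replaces each chain by $\{m_j,\,m_jp,\,\ldots,\,m_jp^{|\text{chain}|-1}\}$. Within a chain this can only increase the sum; across two chains the key combinatorial fact is that after the replacement every ``old'' cross term is dominated by at most two ``new'' cross terms, yielding a factor $2$ precisely for pairs whose $p$-valuations differ after the transformation. Iterating over all primes, these factors of $2$ multiply to $2^{\omega(mn/(m,n)^2)}$ on the new set, and the final set is divisor-closed. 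The essential point you are missing is that the factor $2^{\omega}$ is not meant to compensate a single bijection but is accumulated one prime at a time through a sequence of set transformations.
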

\begin{proof} Following the proof of \cite[Lemma 2]{ABS}, we transform $\mathcal{M}$ into $\mathcal{M}'$ by means of the following algorithm. Fix a prime $p$ such that $p$ divides some number in $\mathcal{M}$. Then there exist distinct numbers $m_j$, $j=1,..., \ell$ with $\ell \le |\mathcal{M}|$ such that we may write
\[ \mathcal{M} =\bigcup_{j=1}^\ell \mathcal{M}_j,\]
where $\mathcal{M}_j$ consists of those $m$ in $\mathcal{M}$ such that $m/m_j$ is a power of $p$. We then replace the numbers in $\mathcal{M}_j$ by the numbers $m_j, m_j p,..., m_j p^{|\mathcal{M}_j|-1}$. This transformation is then performed for every prime dividing some number in $\mathcal{M}$. A close inspection of the largest possible change in the GCD sum in each step of this series of transformations (carried out in detail in the proof of \cite[Lemma~2]{ABS}) gives the desired estimate.  \end{proof}
We will also need the following two lemmas.
\begin{lemma}\label{square-free} 
%
%
Suppose that $0<\alpha<1/2$ and that $\beta$ is a real number. Then for every $\beta'>\beta/(2\alpha)$ there exists a positive constant $C$ with the following property. If $\mathcal{K}$ is a set of positive integers  with $|\mathcal{K}|=K$, then
\be \label{easy}
\sum_{m\in \mathcal{K}}\frac{d(m)^\beta}{m^{2\alpha}} \le C K^{1-2\alpha} [\log K]^{2^{\beta'}-1}.
\ee
\end{lemma}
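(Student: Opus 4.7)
Proof proposal:

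My plan is a dyadic decomposition of $\mathcal{K}$ combined with H\"older's inequality, which converts $\sum d(m)^\beta$ on each dyadic block into the classical initial-segment sum for $d(m)^{\beta'}$. The hypothesis $\beta' > \beta/(2\alpha)$ enters at exactly one point: it makes a certain power of $2^j$ strictly negative so that the dyadic series decays geometrically and is dominated by the terms with $j \approx \log_2 K$.

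I would first dispose of the case $\beta \le 0$: there $d(m)^\beta \le 1$ and $\sum_{m \in \mathcal{K}} 1/m^{2\alpha} \le \sum_{k=1}^K 1/k^{2\alpha} \ll K^{1-2\alpha}$ by rearrangement (since $1/m^{2\alpha}$ is decreasing), which is consistent with the claim after enlarging $C$. Suppose henceforth $\beta > 0$, and set $\mathcal{K}_j := \mathcal{K} \cap [2^j, 2^{j+1})$, $K_j := |\mathcal{K}_j|$, so that $K_j \le \min(K, 2^j)$ and $\sum_j K_j = K$. On $\mathcal{K}_j$ replace $1/m^{2\alpha}$ by $2^{-2\alpha j}$ and apply H\"older with conjugate exponents $\beta'/(\beta'-\beta)$ and $\beta'/\beta$:
\[
\sum_{m \in \mathcal{K}_j} d(m)^\beta \le K_j^{1-\beta/\beta'}\Bigl(\sum_{2^j \le m < 2^{j+1}} d(m)^{\beta'}\Bigr)^{\beta/\beta'}.
\]
The standard estimate $\sum_{m \le N} d(m)^{\beta'} \ll N (\log N)^{2^{\beta'}-1}$ then gives the inner factor $\ll 2^j j^{2^{\beta'}-1}$, so
\[
\sum_{m \in \mathcal{K}_j} \frac{d(m)^\beta}{m^{2\alpha}} \ll K_j^{1-\beta/\beta'} \, 2^{-j(2\alpha - \beta/\beta')} j^{(2^{\beta'}-1)\beta/\beta'},
\]
and the hypothesis forces $2\alpha - \beta/\beta' > 0$.

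Finally I sum over $j$, splitting at $j_0 := \lfloor \log_2 K \rfloor$. For $j \le j_0$ I use $K_j \le 2^j$, obtaining summand $\ll 2^{j(1-2\alpha)} j^{C}$ with $C := (2^{\beta'}-1)\beta/\beta'$; this is a geometric-type sum in the ratio $2^{1-2\alpha}>1$, dominated by $j = j_0$ and contributing $\ll K^{1-2\alpha}(\log K)^C$. For $j > j_0$ I use $K_j \le K$, giving summand $\ll K^{1-\beta/\beta'} \, 2^{-j(2\alpha - \beta/\beta')} j^C$, a genuinely decaying geometric series dominated by $j = j_0 + 1$; the factors $K^{1-\beta/\beta'} \cdot K^{-(2\alpha - \beta/\beta')}$ combine to $K^{1-2\alpha}$, again yielding $\ll K^{1-2\alpha}(\log K)^C$. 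Since $\beta < \beta'$ gives $C \le 2^{\beta'}-1$, the claimed bound follows.

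The only delicate point is choosing the H\"older exponent so that the two regimes of $K_j$ produce matching $K^{1-2\alpha}$ powers; once $p = \beta'/(\beta'-\beta)$ is set, this balance happens automatically, and I do not expect any serious obstacle beyond it.
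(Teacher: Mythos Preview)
Your argument is correct. Both your proof and the paper's hinge on the same classical moment bound $\sum_{n\le x} d(n)^{\beta'}\ll x(\log x)^{2^{\beta'}-1}$ and on the fact that $\beta'>\beta/(2\alpha)$ makes a certain dyadic series geometric, but the two proofs organise the reduction differently.

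The paper does not dyadically decompose from the outset. Instead it first observes (via a rearrangement argument) that
\[
\sum_{m\in\mathcal{K}}\frac{d(m)^\beta}{m^{2\alpha}}
\le \sum_{m=1}^{K}\frac{d(m)^\beta}{m^{2\alpha}}
+\sum_{\ell\ge 0}\sum_{\substack{2^{\ell}K<m\le 2^{\ell+1}K\\ d(m)^\beta>2^{2\alpha\ell}}}\frac{d(m)^\beta}{m^{2\alpha}},
\]
the point being that any $m\in\mathcal{K}$ with $m>K$ and $d(m)^\beta\le 2^{2\alpha\ell}$ has $d(m)^\beta/m^{2\alpha}\le K^{-2\alpha}$, and such terms can be swapped one-for-one with the unused integers in $\{1,\dots,K\}$. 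On the large-divisor tail the paper then applies a Chebyshev-type bound $d(m)^\beta\le 2^{-(\beta'/\beta-1)2\alpha\ell}\,d(m)^{\beta'}$ rather than H\"older.

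Your route replaces this rearrangement-plus-Chebyshev step with a uniform dyadic split and H\"older on each block, which is arguably more transparent (no hidden swap of terms). It also happens to give the log-exponent $(2^{\beta'}-1)\beta/\beta'$, which is strictly smaller than the paper's $2^{\beta'}-1$; the paper remarks after the proof that the exponent can be sharpened further to $2\alpha(2^{\beta'}-1)$, so your improvement is real but not optimal. For the application in the paper this makes no difference.
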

\begin{proof} We begin by observing that
\[ \sum_{m\in \mathcal{K}}\frac{d(m)^\beta}{m^{2\alpha}}
\le \sum_{m=1}^{K} \frac{d(m)^\beta}{m^{2\alpha}} +
\sum_{\ell=0}^\infty \sum_{\substack{2^{\ell}K<m\le 2^{\ell+1} K \\ d(m)^\beta>2^{2\alpha \ell}}} \frac{d(m)^\beta}{m^{2\alpha}}. \]
Now
\begin{align*} \sum_{\substack{2^{\ell}K<m\le 2^{\ell+1} K \\ d(m)^\beta>2^{2\alpha \ell}}} \frac{d(m)^\beta}{m^{2\alpha}}
& \le 2^{-(\beta'/\beta-1)2\alpha \ell} \sum_{2^{\ell}K<m\le 2^{\ell+1} K} \frac{d(m)^{\beta'}}{m^{2\alpha}} \\
& \ll 2^{-(\beta'/\beta-1)2\alpha \ell}\cdot 2^{(1-2\alpha)\ell} K^{1-2\alpha} (\log 2^{\ell} K)^{2^{\beta'}-1}, \end{align*}
where we used the classical formula 
\[ \sum_{n\le x} d(n)^{\beta'} = B x (\log x)^{2^{\beta'}-1}\big(1+O((\log x)^{-1})\big) \] 
which holds with $B$ an absolute constant \cite{W, Se}. It follows that the sum over $\ell$ is dominated by a convergent geometric series if $\beta'>\beta/(2\alpha)$.
\end{proof}
We mention without proof that a more careful analysis shows that the exponent $2^{\beta'}-1$ on the right-hand side of \eqref{easy} can be replaced by 
$2\alpha(2^{\beta'}-1)$ with the same requirement that $\beta'>\beta/(2\alpha)$. Using results on the distribution of `large' values of $d(n)$ (see \cite{Nor}), we can show that this is optimal in the sense that the inequality fails with any exponent less than $2\alpha(2^{\beta/(2\alpha)}-1)$. 

\begin{lemma}\label{divideout} If $\mathcal{M}$ is a divisor closed set of square-free numbers, then  $|\frac{1}{p}\mathcal{M}|\le \frac{1}{2}|\mathcal{M}|$ for every prime $p$ in $\mathcal{M}$. 
\end{lemma}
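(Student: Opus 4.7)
The plan is to partition $\mathcal{M}$ according to divisibility by $p$ and exhibit an injection from the ``multiples'' part to the ``non-multiples'' part. Writing $A=\{m\in\mathcal{M}:p\mid m\}$ and $B=\{m\in\mathcal{M}:p\nmid m\}$, we have the disjoint decomposition $\mathcal{M}=A\sqcup B$. Interpreting $\frac{1}{p}\mathcal{M}$ as $\{m/p:m\in A\}$ (which is how it arises in the algorithm of Lemma~\ref{divisor}), the map $m\mapsto m/p$ is clearly a bijection from $A$ onto $\frac{1}{p}\mathcal{M}$, so $|\frac{1}{p}\mathcal{M}|=|A|$. The inequality we want is therefore equivalent to $|A|\le |B|$.

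The key observation uses both hypotheses at once. If $m\in A$, then since $m$ is square-free we have $p^2\nmid m$, so $m/p$ is an integer coprime to $p$. Since $\mathcal{M}$ is divisor-closed and $m/p$ divides $m\in\mathcal{M}$, we conclude $m/p\in\mathcal{M}$, and as $p\nmid m/p$ this places $m/p\in B$. Thus the assignment $\varphi\colon A\to B$ given by $\varphi(m)=m/p$ is well-defined, and it is trivially injective. This yields $|A|\le |B|$.

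Combining, $|\frac{1}{p}\mathcal{M}|=|A|\le |B|= |\mathcal{M}|-|A|$, so $2|A|\le |\mathcal{M}|$, which is the desired bound. I do not anticipate any real obstacle; the statement is essentially a pigeonhole consequence of the two structural hypotheses, and the only subtlety is in recognising that square-freeness is exactly what guarantees $m/p\notin A$ (i.e.\ that $\varphi$ lands in $B$ rather than merely in $\mathcal{M}$), which is what prevents the trivial injection from merely giving $|A|\le|\mathcal{M}|$.
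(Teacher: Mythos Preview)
Your proof is correct and is essentially the same argument as the paper's: both observe that the elements of $\frac{1}{p}\mathcal{M}$ are coprime to $p$ (by square-freeness), lie in $\mathcal{M}$ (by divisor-closure), and are disjoint from their $p$-multiples in $\mathcal{M}$, yielding $2|\frac{1}{p}\mathcal{M}|\le|\mathcal{M}|$. You have simply formalised this as an explicit injection $A\to B$, whereas the paper phrases it as exhibiting $2\ell$ distinct elements.
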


\begin{proof} Suppose that $|\frac{1}{p}\mathcal{M}|=\ell$ and write $\frac{1}{p}\mathcal{M}=\{ m_1,\ldots, m_\ell \}$. Then $\mathcal{M}$ contains $pm_1,\ldots, p m_\ell$ and hence also $m_1,\ldots,m_M$, since it is divisor closed. As $\mathcal{M}$ is square-free, these numbers are all distinct, and so it follows that $|\mathcal{M}|\ge 2\ell$.
\end{proof}

We are now prepared to prove the bound from above in \eqref{goal}.
To begin with, we define
\[ \widetilde{\Gamma}_\alpha(N):=\max_{\mathcal{M} \text{ divisor closed}, |\mathcal{M}|=N} \frac{1}{N} \sum_{m,n\in \mathcal{M}}\frac{{(m,n)}^{2\alpha}}{(mn)^{\alpha}}
2^{\omega(mn/{(m,n)}^{2} )}. \]
By Lemma~\ref{divisor}, we have $\Gamma_\alpha(N)\le \widetilde{\Gamma}_\alpha(N)$, which means that it suffices to estimate
$\widetilde{\Gamma}_\alpha(N)$.
Hence we assume that the set $\mathcal{M}$ is divisor closed and estimate instead the sum
\[ \widetilde{S}:=\sum_{m,n\in \mathcal{M}}\frac{{(m,n)}^{2\alpha}}{(mn)^{\alpha}} 2^{\omega(mn/(m,n)^2)}.
\]
In what follows, $\mathcal{M}^*$ will denote the subset of $\mathcal{M}$ consisting of the square-free numbers in $\mathcal{M}$. In addition, given $m$ in $\mathcal{M}^*$, we let $\mathcal{M}(m)$ denote the subset of  $\mathcal{M}$ consisting of those numbers $n$ in $\mathcal{M}$ such that $p|n$ if and only if $p|m$. Hence
\[ \mathcal{M}=\bigcup_{m\in \mathcal{M}^*} \mathcal{M}(m) \quad \text{and} \quad \sum_{m\in \mathcal{M}^*} |\mathcal{M}(m)|=N. \]

Now suppose that $k$ and $\ell$ are in $\mathcal{M}^*$ and that $|\mathcal{M}(k)|\ge |\mathcal{M}(\ell)|$. We then find that
\begin{align*} \sum_{m\in \mathcal{M}(k), n\in \mathcal{M}(\ell)}\frac{{(m,n)}^{2\alpha}}{(mn)^{\alpha}} 2^{\omega(mn/(m,n)^2)}
& \le \frac{{(k,\ell)}^{2\alpha}}{(k\ell)^{\alpha}} 2^{\omega(k\ell/(k,\ell)^2)}\sum_{n \in \mathcal{M}(\ell)} \prod_{p|k}\big(1+4\sum_{\nu=1}^\infty p^{-\nu} \big) \\
& \ll \frac{{(k,\ell)}^{2\alpha}}{(k\ell)^{\alpha}} 2^{\omega(k\ell/(k,\ell)^2)}|\mathcal{M}(\ell)| d(k)^{\varepsilon},
\end{align*}
where the implicit constant in the latter relation only depends on $\alpha$ and $\varepsilon$. Here $\varepsilon$ can be any positive number, but in what follows we will require that $0<\varepsilon < 1-2\alpha$. We infer from the latter relation that
\[ \widetilde{S}\ll \sum_{m,n\in \mathcal{M}^*} |\mathcal{M}(m)|^{1/2} d(m)^{\varepsilon} |\mathcal{M}(n)|^{1/2} d(n)^{\varepsilon} \frac{{(m,n)}^{2\alpha}}{(mn)^{\alpha}} 2^{\omega(mn/(m,n)^2)}. \]
This leads to the bound
\[
\widetilde{S}\ll \sum_{k\in \mathcal{M}^*} \left(\sum_{m\in \frac{1}{k} \mathcal{M}^*} \frac{|\mathcal{M}(mk)|^{1/2} d(mk)^{\varepsilon} d(m)^{1+\varepsilon}}{m^{\alpha}} \right)^2.
\]
By the Cauchy--Schwarz inequality, we obtain from this that
\[ \widetilde{S}\ll \sum_{k\in \mathcal{M}^*} d(k)^{2\varepsilon} \sum_{n\in \frac{1}{k} \mathcal{M}^*}\frac{|\mathcal{M}(nk)|}{d(n)^{\beta}} \sum_{m\in \frac{1}{k} \mathcal{M}^*} \frac{d(m)^{\beta+2+4\varepsilon}}{m^{2\alpha}},\]
where $\beta$ is a positive parameter to be chosen later.
Using Lemma~\ref{square-free} and the estimate
\[ |\frac{1}{k}\mathcal{M}|\le N 2^{-\omega(k)},\]
which we get from Lemma~\ref{divideout}, we therefore get
\begin{align*}
\widetilde{S} & \ll N^{1-2\alpha}(\log N)^{2^{\beta'}-1} \sum_{k\in \mathcal{M}^*} d(k)^{2\alpha+2\varepsilon-1} \sum_{n\in \frac{1}{k} \mathcal{M}^*}\frac{|\mathcal{M}(nk)|}{d(n)^{\beta}} \\
& =N^{1-2\alpha}(\log N)^{2^{\beta'}-1}\sum_{m\in \mathcal{M}^*}|\mathcal{M}(m)| \sum_{k|m}\frac{1}{2^{(1-2(\alpha+\varepsilon))\omega(k)+\beta\omega(n/k)}}
\end{align*}
with $\beta'> (\beta+2+4\varepsilon)/(2\alpha)$. Since
\[ n\mapsto \sum_{d|n}\frac{1}{2^{(1-2(\alpha+\varepsilon))\omega(d)+\beta\omega(n/d)}} \]
is a multiplicative function, and $n$ is squarefree, it suffices to make sure that
\[ \frac{1}{2^{1-2(\alpha+\varepsilon)}}+\frac{1}{2^{\beta}}\le 1. \]
This means that we need
\[ \beta\ge \frac{\log\frac{1}{1-2^{2(\alpha+\varepsilon) -1}}}{\log 2} \] to obtain the uniform bound
\[ \sum_{k|m}\frac{1}{2^{(1-2(\alpha+\varepsilon))\omega(k)+\beta\omega(n/k)}}\le 1. \]
We then find  that
\[ \widetilde{S} \ll N^{1-2\alpha}(\log N)^{2^{\beta'}-1}\sum_{m\in \mathcal{M}^*} |\mathcal{M}(m)|=N^{2-2\alpha}(\log N)^{2^{\beta'}-1}, \]
which in turn leads to the desired conclusion.

\section{Proof of the bound from below in Theorem~\ref{main}}\label{example}
This section will make extensive use of the Euler totient function $\phi(n)$. We will also need an additional multiplicative function, namely
\[ f(n):=\prod_{p|n}\frac{\left(p^{2\alpha-1}\left(1-\frac 1p\right)^{2\alpha}+\left(1-\frac 1p\right)\right)}{\left(\frac 1p\left(1-\frac 1p\right)+\left(1-\frac 1p\right)^{2-2\alpha}\right)}. \]

We now fix a positive number $M$ and set $k:=\prod_{p\le M}p$. We will need the following lemma.
\begin{lemma}
\label{mult}
For every $c$ such that $c|k$, we have
\be
\label{mult1}
\frac 1{k^{2}}\sum_{d|k}(c,d)^{2\alpha}\Big(\phi\big(\frac{k}{c}\big)\phi\big(\frac{k}{d}\big)\Big)^{\alpha}\big(\phi(c)\phi(d)\big)^{1-\alpha}
=\prod_{p|k}\left(\frac 1p\left(1-\frac 1p\right)+\left(1-\frac 1p\right)^{2-2\alpha}\right)\frac ckf\big(\frac kc\big).
\ee
Moreover, we have
%
%
\be
\label{mult10}
\frac 1{k^{2}}\!\sum_{c,d|k}\!(c,d)^{2\alpha}\Big(\phi\big(\frac{k}{c}\big)\phi\big(\frac{k}{d}\big)\Big)^{\alpha}\big(\phi(c)\phi(d)\big)^{1-\alpha}
\!=\!\prod_{p|k}\!\left(p^{2\alpha\!-\!2}\left(1-\!\frac 1p\right)^{2\alpha}\!+\!\frac 2p\left(1-\!\frac 1p\right)\!+\!\left(1-\!\frac 1p\right)^{2\!-\!2\alpha}\right).
\ee
\end{lemma}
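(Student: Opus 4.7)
The strategy is a prime-by-prime computation, using that $k = \prod_{p \le M} p$ is squarefree so that any divisor $c \mid k$ is determined by the set of primes $\{p \le M\}$ dividing it. First I would observe that for $c, d \mid k$ and any prime $p \mid k$, each of the factors $(c,d)^{2\alpha}$, $\phi(k/c)^\alpha$, $\phi(k/d)^\alpha$, $\phi(c)^{1-\alpha}$, $\phi(d)^{1-\alpha}$ contributes a local factor at $p$ depending only on which of $p \mid c$ and $p \mid d$ hold (since $\phi(p) = p-1$). A brief case analysis, combined with the $1/p^2$ coming from $k^2 = \prod_{p \mid k} p^2$, shows that these local factors take exactly three forms: $(1-1/p)^{2-2\alpha}$ when $p$ divides both $c$ and $d$; $(1/p)(1-1/p)$ when $p$ divides exactly one of $c, d$; and $p^{2\alpha-2}(1-1/p)^{2\alpha}$ when $p$ divides neither.

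To prove \eqref{mult1}, I would fix $c \mid k$ and sum over $d \mid k$. Because the summand factors multiplicatively over primes $p \mid k$, the sum over $d$ splits as a product over such $p$, each factor being the sum of the two local contributions corresponding to $p \mid d$ or $p \nmid d$. This rewrites the left-hand side of \eqref{mult1} as
\[
\prod_{p \mid c}\left[\Bigl(1-\tfrac{1}{p}\Bigr)^{2-2\alpha} + \tfrac{1}{p}\Bigl(1-\tfrac{1}{p}\Bigr)\right] \prod_{p \mid k/c}\left[\tfrac{1}{p}\Bigl(1-\tfrac{1}{p}\Bigr) + p^{2\alpha-2}\Bigl(1-\tfrac{1}{p}\Bigr)^{2\alpha}\right].
\]
To match this with the right-hand side, I would use $c/k = \prod_{p \mid k/c}(1/p)$ together with the definition of $f(k/c)$. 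The global product $\prod_{p \mid k}[(1/p)(1-1/p) + (1-1/p)^{2-2\alpha}]$ splits into a part over $p \mid c$, which already matches the first product above, and a part over $p \mid k/c$, which cancels the denominator of $f(k/c)$. What remains at each $p \mid k/c$, after grouping with the factor $1/p$ coming from $c/k$, simplifies to $p^{2\alpha-2}(1-1/p)^{2\alpha} + (1/p)(1-1/p)$, matching the second product.

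For \eqref{mult10} I would sum directly over all pairs $(c,d)$ with $c, d \mid k$. At each prime $p \mid k$ this amounts to summing all four local contributions, producing
\[
\Bigl(1-\tfrac{1}{p}\Bigr)^{2-2\alpha} + 2 \cdot \tfrac{1}{p}\Bigl(1-\tfrac{1}{p}\Bigr) + p^{2\alpha-2}\Bigl(1-\tfrac{1}{p}\Bigr)^{2\alpha},
\]
which coincides exactly with the local factor on the right-hand side of \eqref{mult10}.

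The main obstacle, such as it is, is careful bookkeeping: matching the numerator and denominator in the somewhat opaque definition of $f$ to the correct local factors produced by the case analysis, and verifying that the $1/p$ coming from $c/k$ combines cleanly with the surviving numerator of $f$. Beyond exploiting the multiplicativity afforded by $k$ being squarefree, there is no conceptual difficulty.
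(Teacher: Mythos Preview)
Your proposal is correct and rests on the same idea as the paper's proof: exploit that $k$ is squarefree so that the summand factors over primes $p\mid k$, and reduce both identities to a prime-local computation. The only difference is organizational---the paper first splits $d=d_1d_2$ with $d_1\mid c$, $d_2\mid k/c$ and then evaluates the two resulting sums multiplicatively (and deduces \eqref{mult10} from \eqref{mult1} by a further sum over $c$), whereas you go directly to the four local cases at each prime; your route is slightly more direct but not genuinely different.
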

\begin{proof}
Since every divisor $d$ of $k$ has a unique representation $d=d_1d_2$, where $d_1|c$ and $d_2|\frac kc$ we find that
the left-hand side $LHS$ of \eqref{mult1} is 
\begin{align}
LHS= & \frac{1}{k^2}\phi(k/c)^{\alpha}\phi(c)^{1-\alpha}\sum_{d_1|c}\sum_{d_2|\frac kc}d_1^{2\alpha}\phi\big(\frac c{d_1}\big)^{\alpha}\phi\big(\frac{k/c}{d_2}\big)^\alpha
\phi(d_1)^{1-\alpha}\phi(d_2)^{1-\alpha} \nonumber
\\
= & \frac{1}{k^2}\phi(k/c)^{\alpha}\phi(c)^{1-\alpha}\left(\sum_{d_1|c}d_1^{2\alpha}\phi\big(\frac c{d_1}\big)^{\alpha}\phi(d_1)^{1-\alpha}\right)
\left(\sum_{d_2|\frac kc}\phi\big(\frac{k/c}{d_2}\big)^\alpha\phi(d_2)^{1-\alpha}\right). \label{return}
\end{align}
Using the formula $\phi(d)=d\prod_{p|d}\left(1-\frac 1p\right)$ and the fact that the respective sums represent multiplicative functions, we find that
\[
\sum_{d_1|c}d_1^{2\alpha}\phi\big(\frac c{d_1}\big)^{\alpha}\phi(d_1)^{1-\alpha}=\prod_{p|c}\left(p^\alpha\left(1-\frac 1p\right)^\alpha+p^{1+\alpha}\left(1-\frac 1p\right)^{1-\alpha}\right),
\]
and
\[
\sum_{d_2|\frac kc}\phi\big(\frac{k/c}{d_2}\big)^\alpha\phi(d_2)^{1-\alpha}=\prod_{p|\frac kc}\left(p^\alpha\left(1-\frac 1p\right)^\alpha+p^{1-\alpha}\left(1-\frac 1p\right)^{1-\alpha}\right).
\]
Returning to \eqref{return} and using again that $\phi(d)=d\prod_{p|d}\left(1-\frac 1p\right)$, we therefore obtain
\begin{align*}
LHS = & \frac 1{k^{2}}\sum_{d|k}(c,d)^{2\alpha}\Big(\phi\big(\frac{k}{c}\big)\phi\big(\frac{k}{d}\big)\Big)^{\alpha}\big(\phi(c)\phi(d)\big)^{1-\alpha}
\\
= & \frac ck\prod_{p|c}\left(\frac 1p\left(1-\frac 1p\right)+\left(1-\frac 1p\right)^{2-2\alpha}\right)
\prod_{p|\frac kc}\left(p^{2\alpha-1}\left(1-\frac 1p\right)^{2\alpha}+\left(1-\frac 1p\right)\right)
\\
= & \prod_{p|k}\left(\frac 1p\left(1-\frac 1p\right)+\left(1-\frac 1p\right)^{2-2\alpha}\right)\frac ckf\big(\frac kc\big),
\end{align*}
where the last expression is the right-hand side of \eqref{mult1}. Finally, we get~\eqref{mult10} from~\eqref{mult1} by using that $n\mapsto \sum_{d|n} \frac{1}{d}f(d)$ is a multiplicative function.
\end{proof}
In addition to the identities of the preceding lemma, we need following quantitative estimate.
\begin{lemma}
\label{prod}
For every $\alpha$, $0<\alpha<1/2$, there exists a positive constant $c_\alpha$ such that
$$
\prod_{p\le M}\left(p^{2\alpha-2}\left(1-\frac 1p\right)^{2\alpha}+\frac 2p\left(1-\frac 1p\right)+\left(1-\frac 1p\right)^{2-2\alpha}\right)\ge
c_\alpha(\log M)^{2\alpha}.
$$
\end{lemma}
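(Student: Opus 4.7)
The plan is to take logarithms and reduce the estimate to Mertens' theorem, treating each Euler factor as a perturbation of $1$ for large $p$. Denote the $p$-th factor by
\[ a_p:=p^{2\alpha-2}\left(1-\tfrac{1}{p}\right)^{2\alpha}+\tfrac{2}{p}\left(1-\tfrac{1}{p}\right)+\left(1-\tfrac{1}{p}\right)^{2-2\alpha}. \]
Each $a_p$ is a sum of positive terms, so for any fixed bound $P_0$ the finite product $\prod_{p\le P_0} a_p$ is a positive constant depending only on $\alpha$, which I absorb into $c_\alpha$. Hence it suffices to analyse the asymptotics of $\prod_{P_0<p\le M} a_p$ as $p\to\infty$.

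First I would Taylor-expand the two binomial factors: $(1-1/p)^{2\alpha}=1-2\alpha/p+O(1/p^{2})$ and $(1-1/p)^{2-2\alpha}=1-(2-2\alpha)/p+O(1/p^{2})$. The first summand of $a_p$ is then $O(p^{2\alpha-2})$, and combining the three contributions the coefficient of $1/p$ is $2-(2-2\alpha)=2\alpha$, giving
\[ a_p=1+\frac{2\alpha}{p}+O\!\left(\frac{1}{p^{2-2\alpha}}\right), \]
where I have used $0<\alpha<1/2$ to conclude $1<2-2\alpha<2$, so that $p^{2\alpha-2}=1/p^{2-2\alpha}$ dominates the $O(1/p^{2})$ terms. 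Applying $\log(1+x)=x+O(x^{2})$ now gives
\[ \log a_p=\frac{2\alpha}{p}+O\!\left(\frac{1}{p^{2-2\alpha}}\right). \]

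Summing over primes $p\le M$, the error series $\sum_{p}p^{-(2-2\alpha)}$ converges precisely because $2-2\alpha>1$, and Mertens' theorem gives $\sum_{p\le M}1/p=\log\log M+O(1)$. Combining these,
\[ \sum_{p\le M}\log a_p=2\alpha\log\log M+O_\alpha(1), \]
and exponentiating (and restoring the constant from the primes $p\le P_0$) yields $\prod_{p\le M}a_p\ge c_\alpha(\log M)^{2\alpha}$, as claimed.

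I do not foresee a serious obstacle: the only point that requires care is verifying that the first summand $p^{2\alpha-2}(1-1/p)^{2\alpha}$ really contributes only to the error and not to the leading $1/p$ coefficient. This is exactly the place where $\alpha<1/2$ is used, and it also accounts for the fact that $c_\alpha$ must be allowed to degenerate as $\alpha\nearrow 1/2$, in line with the phase-transition behaviour emphasized in the introduction.
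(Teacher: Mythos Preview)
Your proof is correct and follows essentially the same approach as the paper: both identify the asymptotic $a_p=1+2\alpha/p+O(p^{2\alpha-2})$ and then appeal to Mertens' theorem. The only cosmetic difference is that you work additively with $\sum_{p\le M}1/p=\log\log M+O(1)$ (Mertens' second theorem), whereas the paper phrases the same step multiplicatively via $\prod_{p\le M}(1-1/p)\sim e^{-\gamma}/\log M$ (Mertens' third theorem).
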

\begin{proof}
The result follows from the fact that
$$
p^{2\alpha-2}\left(1-\frac 1p\right)^{2\alpha}+\frac 2p\left(1-\frac 1p\right)+\left(1-\frac 1p\right)^{2-2\alpha}=1+\frac{2\alpha}p+O(p^{2\alpha-2}),
\quad{p\to\infty},
$$
along with Mertens's third theorem, i.e., the fact that $\prod_{p\le M} (1-1/p)\sim e^{\gamma}/\log M$ when \mbox{$M\to \infty$}.
\end{proof}
The following theorem yields the bound from below in Theorem~\ref{main}. 
\begin{theorem}
\label{mult2}
For every $\alpha$, $0<\alpha<1/2$, there exists a positive constant $c_\alpha$ such that if $N$ is a positive integer, then there exists a set of integers $\mathcal{M}$ of cardinality $N$ such that
\[
\sum_{m,n\in \mathcal{M}}\frac{{(m,n)}^{2\alpha}}{(mn)^{\alpha}}\ge c_\alpha N^{2-2\alpha}(\log N)^{2\alpha}.
\]
\end{theorem}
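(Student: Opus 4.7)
My plan is to exhibit an explicit set $\mathcal{M}$ of cardinality $N$ whose GCD sum matches Lemma~\ref{mult}'s identity, up to constants. Given $N$, I would choose $M$ so that $\log M$ is comparable to $\log N$ and let $k := \prod_{p \le M} p$. For each divisor $c \mid k$, I would take $\mathcal{M}_c := \{c r : r \in R_c\}$ where the $R_c$ are pairwise disjoint sets of primes larger than $M$, with $|R_c|$ tuned so that $T_c := \sum_{r \in R_c} r^{-\alpha}$ is approximately $s \cdot c^\alpha \phi(k/c)^\alpha \phi(c)^{1-\alpha}/k$ for a common scale $s$---the weight suggested by~\eqref{mult1}. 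Then $\mathcal{M} := \bigcup_{c\mid k} \mathcal{M}_c$, with $s$ chosen so that $|\mathcal{M}|=N$.

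The core observation is that for $m = cr \in \mathcal{M}_c$ and $n = dr' \in \mathcal{M}_d$ with $r, r'$ distinct primes $> M$, one has $(m, n) = (c, d)(r, r') = (c, d)$ and $mn = cd \cdot rr'$, giving
\[
\frac{(m,n)^{2\alpha}}{(mn)^\alpha} = \frac{(c,d)^{2\alpha}}{(cd)^\alpha} \cdot \frac{(r,r')^{2\alpha}}{(rr')^\alpha}.
\]
Separating diagonal and off-diagonal contributions (using disjointness of the $R_c$'s) yields
\[
\sum_{m,n \in \mathcal{M}} \frac{(m,n)^{2\alpha}}{(mn)^\alpha} = N + \sum_{c, d \mid k} \frac{(c, d)^{2\alpha}}{(cd)^\alpha} T_c T_d - \sum_c \sum_{r \in R_c} r^{-2\alpha}.
\]
With the targeted $T_c$, the double sum equals $s^2 \prod_{p \le M} A_p$ by identity~\eqref{mult10}, and Lemma~\ref{prod} bounds this below by $c_\alpha s^2 (\log M)^{2\alpha}$. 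Provided the main term dominates both the diagonal $N$ and the tail $O(N/P^{2\alpha})$ (where $P$ is the order of the primes used), the relation $\log M \asymp \log N$ yields the desired lower bound $c_\alpha N^{2-2\alpha}(\log N)^{2\alpha}$.

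The principal technical obstacle is the parameter balancing. The normalization $B := \sum_{c\mid k} c^\alpha \phi(k/c)^\alpha \phi(c)^{1-\alpha}/k$ is multiplicative with per-prime factor comparable to $1 + p^{\alpha-1}$, so its logarithm grows like $M^\alpha/\log M$; matching Lemma~\ref{mult}'s weights literally would make $|\mathcal{M}| \asymp sP^\alpha B$ super-polynomial in $M$, forcing $\log N \gg \log M$ and destroying the translation from $(\log M)^{2\alpha}$ to $(\log N)^{2\alpha}$. The resolution requires a refined construction---either truncating to divisors $c$ whose weight exceeds a threshold, letting the prime range $P$ vary with $c$, or restricting to a subfamily of divisors concentrated where the Lemma~\ref{mult} weight is largest---so that $\log N$ remains of order $\log M$ while the main term still captures a constant proportion of the bound from Lemma~\ref{prod}. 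Carrying out this refinement, with the corresponding multiplicative estimates on $B$ and $\prod_{p\le M} A_p$, is the technical crux of the argument.
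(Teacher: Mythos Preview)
Your framework is on the right track and matches the paper's: build $\mathcal{M}$ from divisors of $k=\prod_{p\le M}p$ times multiplier sets, so that the GCD sum factors into the divisor sum of Lemma~\ref{mult}, and then invoke Lemma~\ref{prod}. You have also correctly located the obstruction: the full normalization $B=\sum_{c\mid k}c^\alpha\phi(k/c)^\alpha\phi(c)^{1-\alpha}/k$ has per-prime factor $\approx 1+p^{\alpha-1}$ and hence size $\exp(cM^\alpha/\log M)$, which would force $\log N\gg\log M$ and destroy the $(\log N)^{2\alpha}$ gain.

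What is missing is the actual resolution of this crux, and the paper's resolution differs from your outline in two linked ways. First, the paper takes $M=N^\delta$ and restricts to divisors $d=k/a$ where $a$ runs over the first $[N^{1/3}]$ $M$-smooth square-free integers; the contribution of the omitted divisors to~\eqref{mult10} is then shown, via Rankin's trick applied to the multiplicative function $f$ of Lemma~\ref{mult}, to be at most $\tfrac13$ of the full product, so a constant proportion of Lemma~\ref{prod}'s bound survives. Second---and this is the point your scheme misses---the multiplier sets are not disjoint families of large primes but, for each $d$, the first $[N\phi(d)/k]$ integers $s$ with $(s,k/d)=1$. The sets $dS_d$ are then automatically pairwise disjoint (the $M$-smooth part of $ds$ recovers $d$), their total size is at most $\sum_{d\mid k}\phi(d)/k\cdot N=N$, and the elementary bound $s_d\le 2N\phi(d)/(d\phi(k/d))$ makes $\sum_{s\in S_d}s^{-\alpha}\ge|S_d|s_d^{-\alpha}$ come out \emph{exactly} proportional to the weight $d^\alpha\phi(k/d)^\alpha\phi(d)^{1-\alpha}/k$ of~\eqref{mult1}, with no tuning of $|R_c|$ needed. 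Your disjoint-prime construction cannot simultaneously match these weights and keep $|\mathcal{M}|=N$ with $\log M\asymp\log N$; once you truncate the divisor family as you suggest, you are essentially forced back to the paper's construction, but in a less natural form.
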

\begin{proof}
Fix $\alpha$, $0<\alpha<1/2$, and let $N$ be positive integer. Set $M=N^{\delta}$, where $\delta$, $0<\delta<1$, is a constant depending only on $\alpha$ to be chosen later,
$k=\prod_{p\le M}p$. Let $\mathcal{A}$ be the set of the first $[N^{1/3}]$ $M$-smooth square-free numbers and $\mathcal{D}$ be the set of integers of the form $k/a$ with $a$ in $\mathcal{A}$. For every number $d$ in $\mathcal{D}$ denote by $S_d$ the set of the first $[N\phi(d)/k]$ integers $s$ such that $(s,k/d)=1$, and by $s_d$ the maximal number in $S_d$. Also, let $dS_d$ be the set of integers of the form $ds$, where $s\in S_d$ and $d\in\mathcal{D}$. Finally, set
$\mathcal{M}:=\bigcup_{d\in\mathcal{D}}dS_d$.

It is clear that all numbers $d$ in $\mathcal{D}$ are square-free and also that the sets $dS_d$ are pairwise disjoint.
Moreover, since $\sum_{d|k}\phi(d)=k$ we have that $|\mathcal{M}|<N$. Also, 
\be \label{sd} |S_d|\ge \frac{N^{2/3}}{2 \log N} \ee
for sufficiently large $N$ and every $d$ in $\mathcal{D}$; this follows from the formula $\phi(d)=d\prod_{p|d}\left(1-\frac{1}{p}\right)$ and an application Mertens's third theorem.  
We can use this to get an upper bound for $s_d$. Indeed,  each set $S_d$
is just a set of numbers of the form $a\mod (k/d)$, where $a$ is from a set of cardinality $\phi(k/d)$.
Therefore if $d$ is in $\mathcal{D}$, then
$$
s_d\le \frac{2N\phi(d)}{d\phi(k/d)}.
$$
Here we used that $|S_d|\ge k/d$ which follows from \eqref{sd}.
Now we have, using \eqref{sd} in the last step, 
\begin{align*}
\sum_{m,n\in \mathcal{M}}\frac{{(m,n)}^{2\alpha}}{(mn)^{\alpha}} & \ge \sum_{c,d\in \mathcal{D}}\frac{{(c,d)}^{2\alpha}}{(cd)^{\alpha}}
\sum_{m\in S_c}\frac{1}{m^\alpha}\sum_{n\in S_d}\frac{1}{n^\alpha}
\ge\sum_{c,d\in \mathcal{D}}\frac{{(c,d)}^{2\alpha}}{(cd)^{\alpha}}
\frac{|S_c||S_d|}{s_c^\alpha s_d^\alpha} \\
& \ge c_\alpha N^{2-2\alpha}\frac 1{k^{2}}\sum_{c,d\in \mathcal{D}}(c,d)^{2\alpha}(\phi(k/c)\phi(k/d))^{\alpha}(\phi(c)\phi(d))^{1-\alpha}
\end{align*}
for some positive constant $c_\alpha$ depending only on $\alpha$. In view of~\eqref{mult10} of Lemma~\ref{mult} and Lemma~\ref{prod},  the proof will be complete if we can prove that 
$$
\sum_{c,d\in \mathcal{D}}(c,d)^{2\alpha}(\phi(k/c)\phi(k/d))^{\alpha}(\phi(c)\phi(d))^{1-\alpha}\ge 
\frac 13 \sum_{c,d|k}(c,d)^{2\alpha}(\phi(k/c)\phi(k/d))^{\alpha}(\phi(c)\phi(d))^{1-\alpha}.
$$
The latter inequality will follow from the bound
\[
\sum_{c\not\in\mathcal{D},d|k}(c,d)^{2\alpha}(\phi(k/c)\phi(k/d))^{\alpha}(\phi(c)\phi(d))^{1-\alpha}\le 
\frac 13 \sum_{c,d|k}(c,d)^{2\alpha}(\phi(k/c)\phi(k/d))^{\alpha}(\phi(c)\phi(d))^{1-\alpha}.
\]
By~\eqref{mult1}, this is equivalent to
\be
\label{final}
\sum_{n\in \mathcal{F}}\frac{f(n)}{n}\le \frac 13\prod_{p\le M}\left(1+\frac{f(p)}{p}\right),
\ee
where $\mathcal{F}$ is the set of all $M$-smooth square-free numbers larger than $[N^{1/3}]$.
By Rankin's trick, we have
\begin{align*}
\sum_{n\in \mathcal{F}}\frac{f(n)}{n} & \le N^{-\frac{1}{3\delta\log N}}\prod_{p\le M}\left(1+\frac{f(p)}pp^{\frac{1}{\delta\log N}}\right) \\
& \le e^{-\frac{1}{3\delta}}\prod_{p\le M}\left(1+\frac{f(p)}{p}\right)\prod_{p\le M}\left(1+\frac{f(p)}{p}\left(p^{\frac{1}{\delta\log N}}-1\right)\right).
\end{align*}
Now we note that the second product is bounded by a constant depending only on $\alpha$ (not on $\delta$).
Indeed,
$$
\sum_{p\le M}\frac{f(p)}{p}\left(p^{\frac{1}{\delta\log N}}-1\right)\le 2\sum_{p\le M}\frac{f(p)-1}{p}+2\sum_{p\le M}\frac{\log p}{p\log M}.
$$ 
The first sum is bounded because $f(p)=1+p^{2\alpha-1}+o(p^{2\alpha-1})$ as $p\to\infty$, and the second sum is bounded since $\sum_{p\le M} (\log p)/p- \log M\le 2$ by Mertens's first theorem.
Therefore, choosing $\delta$ sufficiently small (depending only on $\alpha$), we get~\eqref{final}. Theorem~\ref{mult2} is proved.
\end{proof}

\end{document}